\newcommand{\R}{\mathbb{R}}
\newcommand{\K}{\kappa}
\newtheorem{thm}{Theorem}
\renewcommand{\choose}[2]{{{#1}\atopwithdelims(){#2}}}
\newcommand{\mchoose}[2]{%
  \mathchoice%
    {\left(\kern-0.48em\choose{#1}{#2}\kern-0.48em\right)}
    {\left(\kern-0.30em\choose{\smash{#1}}{\smash{#2}}\kern-0.30em\right)}
    {\left(\kern-0.30em\choose{\smash{#1}}{\smash{#2}}\kern-0.30em\right)}
    {\left(\kern-0.30em\choose{\smash{#1}}{\smash{#2}}\kern-0.30em\right)}
}
\newcommand{\sP}{\mathscr{P}}
\newcommand{\sC}{\mathscr{C}}
\newcommand{\sD}{\mathscr{D}}
\newcommand{\sG}{\mathscr{G}}
\newcommand{\lp}{\left(}
\newcommand{\rp}{\right)}
\newcommand{\fa}{\text{ for all }}
\newcommand{\al}{\alpha}
\newcommand{\g}{\gamma}
\newcommand{\tu}{\tau}
\newcommand{\si}{\sigma}
\newcommand{\be}{\beta}
\newcommand{\es}{\emptyset}
\newcommand{\sube}{\subseteq}
\newcommand{\E}{\mathbb{E}}
\title{A Combinatorial Interpretation of the Joint Cumulant}
\author{Connor Ahlbach, Jeremy Usatine and Nicholas Pippenger\footnote{Authors' address: Department of Mathematics, Harvey Mudd College, 301 Platt Blvd., Claremont, CA 91711.}}
\date{\today}
\begin{document}

\begin{titlepage}
\maketitle

\begin{abstract} In this paper, we apply the combinatorial proof technique of Description, Involution, Exceptions (DIE) to prove various known identities for the joint cumulant. Consider a set of random variables $S = \{X_1, \ldots, X_n\} $. Motivated by the definition of the joint cumulant, we define $ \sC(S) $ as the set of cyclically arranged partitions of $S$, allowing us to express the joint cumulant of $ S $ as a weighted, alternating sum over $\sC(S)$. We continue to define other combinatorial objects that allow us to rewrite expressions originally in terms of the joint cumulant as weighted sums over the set of these combinatorial objects. Then by constructing weight-preserving, sign-reversing involutions on these objects, we evaluate the original expressions to prove the identities, demonstrating the utility of DIE.

\end{abstract}

\end{titlepage}

\pagenumbering{arabic}

\section{Introduction}

Let $S = \{X_1, X_2, \ldots, X_n\}$ be a set of random variables. Let $ \sP(A) $ refer to the set of partitions of set $ A $ into nonempty subsets, which we call blocks. So, for $ \tau \in \sP(A) $, each $ \be \in \tau $ is a block. Then, the joint cumulant of the set of random variables $ S $ is defined to be
\[
	\K(S) = \K(X_1, X_2, \ldots, X_n) = \sum_{ \tau \in \sP(S) } (-1)^{|\tau| - 1} (|\tau| - 1)! \prod_{\be \in \tau} \E \lp \prod_{X \in \be} X \rp.
\]
The joint cumulant is a measure of how far random variables are from independence. Notice that if $ n = 1 $ or $ n = 2 $, the joint cumulant reduces to the expected value and covariance, respectively:
\[
	\K(X) = \E(X), \qquad \K(X,Y) = \E(XY) - \E(X) \E(Y).
\]   

The identities we prove here are all known results for the joint cumulant. The cumulant was first introduced by Thiele in \cite{thiele} in 1899. Brillinger provides a history of the development of higher order moments and cumulants as well as their applications in \cite{history}. For another interpretation of most of these identities involving Mobius inversion by Speed, see \cite{speed}.

In this paper, we use the combinatorial method of Description, Involution, Exception - DIE - a technique for evaluating alternating sums, to prove properties of the joint cumulant. We learned this technique from Prof. Arthur Benjamin at Harvey Mudd College, who introduced it in \cite{benjamin}. Because the expressions we are evaluating are in terms of joint cumulants, which are in terms of expectation values, we use a version of DIE that proceeds as follows:

\begin{itemize} 

\item D (Description): Describe a combinatorial interpretation of the expression. Define a set $ C $ of combinatorial objects, a sign function $s: C \to \{-1,1\}$, and a weight function $W: C \to \R$ such that the sum can be written as
$$ \sum_{\rho \in C}s(\rho) W(\rho). $$

\item I (Involution): Find a weight-preserving, sign-reversing involution on a subset $ C_0 \sube C $. That is, find $f: C_0 \to C_0 $, such that for any $\rho \in C_0$,
$$ f(f(\rho)) = \rho, \qquad W(f(\rho)) = W(\rho), \qquad s(f(\rho)) = -s(\rho). $$

\item E (Exception): Determine the elements of $ C - C_0 $, which we call exceptions. Then,
$$ \sum_{\rho \in C} s(\rho) W(\rho) = \sum_{\rho \in C - C_0 } s(\rho) W(\rho). $$ 

\end{itemize}

The reason DIE works is simple: If we find such an involution $ f $, then we can break up $ C_0 $ into pairs $ (x, f(x)) $ such that $ s(x) W(x) $ and $ s(f(x)) W(f(x)) $ cancel in the sum for all $ x \in C_0 $, leaving us with the exceptions. Usually we describe the involution first and then give the exceptions.

In order to use DIE to prove properties of the joint cumulant, we introduce a combinatorial interpretation of the joint cumulant. We define $ \sC(A) $ to be the set of cyclically arranged partitions of set $ A $. That is, first partition $ A $ and then arrange the blocks of the partition into a cycle. In general, such an object $ \sigma \in \sC(A) $ looks like:

\begin{center}
\begin{tikzpicture}
\tikzstyle{every node} = [draw, shape = circle, fill=gray!5]
\node (1) at (0:2) {$ \beta_1 $} ;
\node (2) at (60:2) {$ \beta_2 $} ;
\node (3) at (300:2) {$ \beta_{|\si|} $} ;

\draw (14:2) arc (15:46:2) ;
\draw (-46:2) arc (-46:-14:2) ;
\draw[style = dashed] (74:2) arc (74:286:2) ;

\end{tikzpicture}
\end{center} 
where the $ \beta_i \in \si $ are the blocks of the partition and $ |\si| $ is the size of the partition, or the number of blocks. Next, we define a weight function for $\sC(S)$. Let $V: \sC(S) \to \R $ be given by
\[
	V(\sigma) = \prod_{ \beta \in \sigma } \E\left( \prod_{X \in \beta} X \right).
\]
Note that $ V $ depends only on the partition, not the cyclic arrangment. For example,

\begin{center}
\begin{tikzpicture}
\tikzstyle{every node} = [draw, shape = circle, fill=gray!10]
\node (1) at (60:1.5) {$ X_1 X_4 $} ;
\node (2) at (180:1.5) {$ X_2 X_6 X_7 $} ;
\node (3) at (300:1.5) {$ X_3 X_5 $} ;

\draw (87:1.5) arc (87:143:1.5) ;
\draw (217:1.5) arc (217:273:1.5) ;
\draw (-33:1.5) arc (-33:33:1.5) ;

\end{tikzpicture}
\end{center}
represents the partition $ \{ \{ X_1,X_4\}, \{X_2,X_6,X_7\},\{X_3,X_5\} \} $ arranged in the cycle \\ $ ( \{ X_1,X_4\} \{X_2,X_6,X_7\} \{X_3,X_5\} ) $, and its weight is 
\[
	\E(X_1 X_4) \E(X_2X_6X_7) \E(X_3X_5). 
\]
Because there are $ (|\tau| - 1)! $ ways to arrange the blocks of $ \tau $ into a cycle, all with value $ \prod_{ \beta \in \tau } \E\left( \prod_{X \in \beta} X \right) $, we readily observe that
\[
	\K(S) = \sum_{ \si \in \sC(S) } (-1)^{ |\si| - 1 } V( \si ).
\]
This is our combinatorial interpretation of the joint cumulant. The set of combinatorial objects is $ \sC(S) $, the sign function is $ s(\si) = (-1)^{ |\si| - 1 } $, and the weight function is $ V(\si) =  \prod_{ \beta \in \si } \E \lp \prod_{X \in \be} X \rp $. We will continue to use the above definition for $ \sC(S) $ and $ V $ throughout this paper. Furthermore, we will define more sets and functions which we will continue to use throught the paper.

\section{Joint Cumulant Identities}

\begin{thm}
\label{SumCoef}
For $ n \ge 2 $, the sum of the coefficients in the joint cumulant is 0. 
\end{thm}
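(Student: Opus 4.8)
\textit{Proof proposal.} The plan is to read ``the sum of the coefficients'' of $\K(S)$ as the number obtained by setting every expectation $\E\lp \prod_{X \in \be} X \rp$ equal to $1$. Under the combinatorial interpretation $\K(S) = \sum_{\si \in \sC(S)} (-1)^{|\si|-1} V(\si)$ derived in the introduction, this is exactly a DIE sum over $C = \sC(S)$ with sign $s(\si) = (-1)^{|\si|-1}$ and \emph{constant} weight $W \equiv 1$; so the theorem amounts to the purely combinatorial claim $\sum_{\si \in \sC(S)} (-1)^{|\si|-1} = 0$ for $n \ge 2$. (Equivalently, writing $\pa{n}{k}$ for the Stirling number of the second kind, the identity $\sum_{k=1}^{n} (-1)^{k-1} (k-1)!\,\pa{n}{k} = 0$; but I would prove it via an involution, in the spirit of the paper, rather than via generating functions.)

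For the involution I would single out the element $X_n$ and consider the block $\be^\ast \in \si$ containing it. If $|\be^\ast| \ge 2$, remove $X_n$ from $\be^\ast$ and insert the new singleton block $\{X_n\}$ immediately counterclockwise of the block $\be^\ast \sm \{X_n\}$ in the cycle; this yields a cyclically arranged partition with one more block. If instead $\be^\ast = \{X_n\}$ is already a singleton, let $C$ be the block immediately clockwise of $\{X_n\}$, delete the block $\{X_n\}$, and replace $C$ by $C \cup \{X_n\}$; this yields one with one fewer block. The key checks are that these two operations are mutually inverse, so together they define a map $f \colon \sC(S) \to \sC(S)$ with $f \circ f = \mathrm{id}$; that $f$ is weight-preserving, which is automatic since $W$ is constant; and that $f$ is sign-reversing, which holds because $|f(\si)| = |\si| \pm 1$.

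The role of the hypothesis $n \ge 2$ is precisely to guarantee that the merge step is always available: whenever $\{X_n\}$ occurs as a singleton block, $|S| \ge 2$ forces $\si$ to contain at least one other block to absorb it, i.e. $|\si| \ge 2$. Hence $f$ is defined on \emph{all} of $\sC(S)$, the exception set $C - C_0$ is empty, and the DIE principle immediately gives $\sum_{\si \in \sC(S)} (-1)^{|\si|-1} = 0$, proving the theorem.

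I expect the only real obstacle to be bookkeeping: fixing the cyclic-placement conventions (``insert $\{X_n\}$ counterclockwise of $\be^\ast \sm \{X_n\}$'' for the split, ``merge with the clockwise neighbor'' for the merge) so that split-then-merge and merge-then-split each recover the original arrangement, and in particular handling the degenerate configurations $|\si| = 1$ (where the cycle is a single block, and the split produces a $2$-cycle) and $n = 2$ (where every cycle has at most two blocks) without an off-by-one slip. Once the convention is pinned down these verifications are routine and, crucially, leave no genuine exceptions behind.
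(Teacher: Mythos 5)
Your proposal is correct and is essentially the paper's own proof: the paper uses the identical description (weight $1$ over $\sC(S)$) and the identical split/merge involution on the block containing a distinguished element (the paper uses $X_1$ where you use $X_n$), inserting the singleton before its former block and merging a singleton into the next block, with no exceptions. The only difference is that you spell out the clockwise/counterclockwise conventions that the paper leaves implicit in its figure.
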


\begin{proof} (DIE)

\begin{itemize} 

\item D (Description): By definition of the joint cumulant, the sum of the coefficients is 
\[
	\sum_{ \tau \in \sP(S) } (-1)^{|\tau| - 1} (|\tau| - 1)! = \sum_{\si \in \sC(S)} (-1)^{ |\si| - 1}.
\]
Here, our set of combinatorial objects is $ \sC(S) $, the sign function is $ s(\si) = (-1)^{ |\si| - 1} $, and the weight function is 1.

\item I (Involution): In any cyclically arranged partition of $ S $, either $ X_1 $ is  or is not alone in its block. If $ X_1 $ is alone, there must exist a block after it as $ n \ge 2 $. Let $ f $ be the map that merges $ X_1 $ into the next block in the cycle if $ X_1 $ is alone and pulls $ X_1 $ out of its block and puts it before that block if $ X_1 $ is not alone. That is, let $ f $ map

\begin{center}
\begin{tikzpicture}
\tikzstyle{every node} = [draw, shape = circle, fill=gray!10]
\node (1) at (0,2) {$ X_1 $} ;
\node (2) at (0,0) {$ B_1 $} ;
\node (3) at (0, -2) {$ B_2 $} ;

\draw (1)--(2);
\draw (2)--(3);
\draw[-)] (-1,1)--(-1,-1);

\draw[red,<->] (1.5,0) -- (2.5,0);

\node (4) at (5,1.5) {$ X_1, B_1 $} ;
\node (5) at (5,-1.5) {$ B_2 $} ;

\draw (4)--(5);
\draw[-)]  (4,1)--(4,-1);

\end{tikzpicture}
\end{center} 

where $ B_1 $ and $ B_2 $ are nonempty sets of random variables. Clearly, $ f $ is an involution and is weight-preserving as all weights are 1. It is also sign-reversing as it changes the number of blocks by $ \pm 1 $.     

\item E (Exceptions): None. 

\end{itemize}

Therefore, the sum of the coefficients is 0.

\end{proof}

\begin{thm}
\label{(n - 1)-indep}
Suppose $ n \ge 2 $ and every proper subset of $ S $ is independent. Then,
\[
	\K(S) = \E \lp \prod_{ X \in S } X \rp - \prod_{ X \in S } \E(X).  
\] 
\end{thm}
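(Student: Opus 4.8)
The plan is to reuse the combinatorial interpretation $\K(S) = \sum_{\si \in \sC(S)} (-1)^{|\si|-1} V(\si)$ and to run DIE with essentially the same $X_1$-merge/split involution from the proof of Theorem~\ref{SumCoef}, but now carrying the genuine weight $V$ instead of the constant weight $1$. For the Description step I would take $C = \sC(S)$, sign $s(\si) = (-1)^{|\si|-1}$, and weight $W(\si) = V(\si)$, so that the quantity to evaluate is exactly $\sum_{\si \in C} s(\si) W(\si) = \K(S)$.

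For the Involution step, let $f$ again merge $X_1$ forward into the next block when $X_1$ is alone, and pull $X_1$ out into its own block placed just before its old block when $X_1$ is not alone. This is automatically sign-reversing, since it changes $|\si|$ by $\pm 1$. The new feature is weight-preservation: splitting $X_1$ off a block $\{X_1\} \cup B$ replaces the factor $\E\!\lp X_1 \prod_{X \in B} X \rp$ by $\E(X_1)\,\E\!\lp \prod_{X \in B} X \rp$, and these agree precisely when $\{X_1\} \cup B$ is a \emph{proper} subset of $S$, by the independence hypothesis. The only way $\{X_1\} \cup B = S$ is when $\si$ is the single block $(S)$ itself (where $f$ splits it into $(\{X_1\}\ (S \setminus \{X_1\}))$), together with that image arrangement $(\{X_1\}\ (S \setminus \{X_1\}))$ (where $f$ merges them back). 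So I would set $C_0 = \sC(S) \setminus \{\,(S),\ (\{X_1\}\ (S\setminus\{X_1\}))\,\}$; since these two objects form a $2$-cycle of $f$, the restriction of $f$ to $C_0$ is still an involution, and it is now both weight-preserving and sign-reversing on all of $C_0$.

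For the Exceptions step, the two removed arrangements contribute directly: $(S)$ has sign $(-1)^{0} = +1$ and weight $\E\!\lp \prod_{X \in S} X \rp$, while $(\{X_1\}\ (S \setminus \{X_1\}))$ has sign $(-1)^{1} = -1$ and weight $\E(X_1)\,\E\!\lp \prod_{X \in S \setminus \{X_1\}} X \rp = \prod_{X \in S} \E(X)$, the last equality using independence of the proper subset $S \setminus \{X_1\}$. Adding these gives $\K(S) = \E\!\lp \prod_{X \in S} X \rp - \prod_{X \in S} \E(X)$, as claimed. (The case $n = 2$ is a sanity check: there $C_0$ is empty and the two exceptions reproduce the covariance formula $\E(X_1 X_2) - \E(X_1)\E(X_2)$.)

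The main obstacle I expect is the bookkeeping in the Involution step — namely, recognizing that the independence hypothesis is exactly what upgrades the Theorem~\ref{SumCoef} involution from weight-$1$-preserving to $V$-preserving, and then correctly identifying that the sole obstruction is the merged block equalling all of $S$, which singles out precisely two exceptional arrangements rather than one. Verifying that discarding both of them leaves $f$ a bona fide involution on $C_0$ is then routine, as is the computation of the two exceptional weights.
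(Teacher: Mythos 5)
Your proposal is correct and follows essentially the same route as the paper: the same $X_1$-merge/split involution from Theorem~\ref{SumCoef}, the observation that the independence hypothesis makes it weight-preserving except on the pair $\{S\}$ and $\{\{X_1\},S\setminus\{X_1\}\}$, and the same two exceptions yielding $\E\lp\prod_{X\in S}X\rp - \prod_{X\in S}\E(X)$. The only cosmetic difference is that you justify weight-preservation locally (the split/merge preserves $V$ unless the merged block is all of $S$) whereas the paper notes globally that every $\si\ne\{S\}$ has weight $\prod_{X\in S}\E(X)$; both are valid.
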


\begin{proof} (DIE)

\begin{itemize} 

\item D (Description): As before,
\[
	\K(S) = \sum_{ \si \in \sC(S) } (-1)^{ |\si| - 1 } V( \si ).
\]

\item I (Involution): Let $ f $ be the same involution as in Theorem \ref{SumCoef}. Notice that each cyclic arrangment of the blocks $ \si \ne \{ S \} $ has weight
\[
	 V( \si) = \prod_{\be \in \si} \E \lp \prod_{X \in \be} X \rp = \prod_{ X \in S } \E(X)  
\]
because every proper subset of $ S $ is independent. Hence, $ f $ is weight-preserving for $ \si \ne \{ S \}, \{ \{X_1\}, \{ X_2, X_3 \cdots X_n \} \} $. It is also sign-reversing as it changes the number of blocks by $ \pm 1 $.     

\item E (Exceptions): We have two exceptions - $ \{ S \} $, with value $ \E \lp  \prod_{ X \in S } X \rp $ and sign $ (1) $, and its pair in the involution, $ \{ \{X_1\}, \{ X_2, X_3 \cdots X_n \} \} $, with value $  \prod_{ X \in S } \E(X) $ and sign $ (-1) $. 
\end{itemize}

Therefore,
\[
	\K(S) = \E \lp \prod_{ X \in S } X \rp - \prod_{ X \in S } \E(X). 
\]

\end{proof}

For the next theorem, we will need the following defintion. We say two sets of random variables $ M = \{ M_1, M_2, \cdots M_p \} $ and $ N = \{ N_1, N_2, \cdots N_q \} $ are independent if
\[
	P( M_i = a_i \fa i , N_j = b_j \fa j) = P( M_i = a_i \fa i ) P( N_j = R_j \fa j).
\]
Furthermore, if $ M $ and $ N $ are independent, then for all $ Q \sube M $, $ R \sube N $, 
\[
	\E \lp \prod_{X \in Q} X \prod_{Y \in R} Y \rp = \E \lp \prod_{X \in Q} X \rp \E \lp \prod_{Y \in R} Y \rp. 
\]

\begin{thm}
If $ S $ can be partitioned into two nonempty, independent subsets, then
\[
	\K(S) = 0.
\] 
\end{thm}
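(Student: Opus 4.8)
The plan is to prove this in exactly the style of the previous two theorems: a weight\nobreakdash-preserving, sign\nobreakdash-reversing involution defined on \emph{all} of $\sC(S)$, with no exceptions. Fix the promised partition $S = M \sqcup N$ into nonempty independent subsets, and, relabeling the variables if necessary, assume $X_1 \in M$. The \emph{Description} step is the usual one: $\K(S) = \sum_{\si \in \sC(S)} (-1)^{|\si| - 1} V(\si)$, with sign $(-1)^{|\si|-1}$ and weight $V$.

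For the \emph{Involution}, given $\si \in \sC(S)$ I would read its blocks around the cycle starting at the block $B_0$ that contains $X_1$, and let $B_t$ (for $t = 0, 1, 2, \ldots$) be the first block met with $B_t \cap N \ne \es$; such a $t$ exists because $N \ne \es$. Define $f(\si)$ by a single move at $B_t$: (i) if $B_t$ also meets $M$ (so $B_t$ is ``mixed''), split it into the two blocks $B_t \cap M$ and $B_t \cap N$, inserting $B_t \cap N$ immediately after $B_t \cap M$ in the cycle; (ii) if instead $B_t \sube N$, then $t \ge 1$ (since $X_1 \in B_0 \cap M$) and the preceding block satisfies $B_{t-1} \sube M$ (it is one of the passed\nobreakdash-over blocks, all of which avoid $N$, or it is $B_0$ when $t = 1$), so merge $B_{t-1}$ and $B_t$ into $B_{t-1} \cup B_t$. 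Each move changes $|\si|$ by $1$, so $f$ is sign\nobreakdash-reversing; and $f$ is weight\nobreakdash-preserving because $M$ and $N$ are independent: a split replaces the factor $\E(\prod_{X \in B_t} X)$ by $\E(\prod_{X \in B_t \cap M} X)\,\E(\prod_{Y \in B_t \cap N} Y)$, which is the same number by the independence property stated just before the theorem, and a merge is the reverse substitution. The \emph{Exceptions} are none (an involution that changes $|\si|$ can have no fixed point), so $\K(S) = 0$.

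The step I expect to be the main obstacle is verifying that $f$ really is an involution. The crucial feature is that, by the choice of $t$, every block strictly before $B_t$ in the chosen cyclic reading avoids $N$ entirely. Hence after a split the new ``first block meeting $N$'' is the freshly created pure\nobreakdash-$N$ block $B_t \cap N$, sitting directly after the pure\nobreakdash-$M$ block $B_t \cap M$, so $f$ now prescribes exactly the merge that undoes the split; after a merge, the new ``first block meeting $N$'' is the mixed block $B_{t-1} \cup B_t$, so $f$ prescribes exactly the split that undoes the merge. I would also check the degenerate cases, namely $\si = \{S\}$ (where $B_0 = S$ is mixed and splits into $M$ and $N$) and $t = 1$ (where $B_{t-1}$ is the block of $X_1$ itself), but the same bookkeeping goes through. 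An alternative, more computational route would be to group the sum by the partitions of $M$ and of $N$ that $\si$ induces and reduce to the identity $\sum_{k} (-1)^{k}\binom{p}{k}\binom{q}{k}\,k!\,(p+q-k-1)! = 0$ for $p,q \ge 1$, but the direct involution above stays within the paper's framework and seems cleaner.
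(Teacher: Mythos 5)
Your proposal is correct and is essentially identical to the paper's own proof: the same involution that locates the first block meeting $N$ when reading the cycle from the block of $X_1$, splits it into its $M$-part followed by its $N$-part if it is mixed, and otherwise merges it with the (necessarily pure-$M$) preceding block, with weight preservation coming from the independence of $M$ and $N$ and no exceptions remaining. Your explicit verification that the split and merge undo each other is a welcome addition the paper leaves implicit.
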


\begin{proof} (DIE)

\begin{itemize} 

\item D (Description): As before,
\[
	\K(S) = \sum_{ \si \in \sC(S) } (-1)^{ |\si| - 1 } V( \si ).
\]

\item I (Involution): Let $ M = \{ M_1, M_2, \cdots M_p \} $ and $ N = \{ N_1, N_2, \cdots N_q \} $ be a partition of $ S $ into two relatively independent subsets. Consider the block containing $ M_1 $. Starting from this block in the cycle, find the first block containing an element of $ N $. Call this block $ B $. Let $ C = B \cap M $ and $ D = B \cap N $. Now, if $ C = \es $, the previous block must have contained only elements of $ M $, or else we would have found the desired block $ B $ sooner, as we started at a block containing $ M_1 $. Then, let $ f $ be the following map: If $ C \ne \es $, pull out $ C $ from $ B $ and make it its own block in front of $ B $. Otherwise, if $ C = \es $, move the previous block into $ B $. That is, let $ f $ map

\begin{center}
\begin{tikzpicture}
\tikzstyle{every node} = [draw, shape = circle, fill=gray!10]
\node (1) at (0,2) {$ C' $} ;
\node (2) at (0,0) {$ D $} ;
\node (3) at (0, -2) {$ E $} ;

\draw (1)--(2);
\draw (2)--(3);
\draw[-)] (-1,1)--(-1,-1);

\draw[red,<->] (1.5,0) -- (2.5,0);

\node (4) at (5,1.5) {$ C', D $} ;
\node (5) at (5,-1.5) {$ E $} ;

\draw (4)--(5);
\draw[-)]  (4,1)--(4,-1);

\end{tikzpicture}
\end{center}

where $ C' \sube M, D \sube N $ are nonempty. Clearly, $ f $ is an involution. Because $M$ and $N$ are relatively independent and $ P \subseteq M, Q \subseteq N$, we have
\[
	\E \lp \prod_{X \in Q} X \prod_{Y \in R} Y \rp = \E \lp \prod_{X \in Q} X \rp \E \lp \prod_{Y \in R} Y \rp. 
\]
Also, $ f $ changes no other blocks besides those involving $ C' $ and $ D $. Hence, $ f $ is weight-preserving. It is also sign reversing since it changes the number of blocks by $ \pm 1 $. 
 
\item E (Exceptions): None. 

\end{itemize}

Hence,
\[
	\K(S) = 0.
\]

\end{proof}

To shorten notation, we will make the following definition. For $ \tau \in \sP(S) $, define
\[
	\K( \tau) = \prod_{ \be \in \tau } \K(\be).
\]

\begin{thm}
\label{CumulantSum}
\[
	\E \lp \prod_{ X \in S } X \rp = \sum_{ \tau \in \sP(S) } \K(\tau). 
\]
\end{thm}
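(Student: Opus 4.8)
The plan is to run DIE one more time, expanding the right-hand side block-by-block. Using $\K(\be) = \sum_{\si \in \sC(\be)} (-1)^{|\si|-1} V(\si)$ for each block $\be$ and distributing the product over $\tau \in \sP(S)$,
\[
	\sum_{\tau \in \sP(S)} \K(\tau) \;=\; \sum_{\tau \in \sP(S)} \ \prod_{\be \in \tau} \ \sum_{\si_\be \in \sC(\be)} (-1)^{|\si_\be| - 1} V(\si_\be) \;=\; \sum_{\rho \in \sD(S)} s(\rho)\, W(\rho),
\]
where $\sD(S)$ is the set of objects $\rho = (\tau, (\si_\be)_{\be \in \tau})$ with $\tau \in \sP(S)$ and $\si_\be \in \sC(\be)$ for every $\be \in \tau$, the sign is $s(\rho) = \prod_{\be \in \tau}(-1)^{|\si_\be|-1}$, and the weight is $W(\rho) = \prod_{\be \in \tau} V(\si_\be)$. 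Such a $\rho$ is a two-level object: $S$ is split into \emph{small blocks} (the blocks of the various $\si_\be$), these small blocks are grouped according to $\tau$, and within each group they are arranged in a cycle. If $k$ is the number of small blocks and $m = |\tau|$ the number of groups, then $s(\rho) = (-1)^{k-m}$, while $W(\rho) = \prod_{\gamma} \E(\prod_{X \in \gamma} X)$ with $\gamma$ ranging over the small blocks, so $W$ depends only on the partition of $S$ into small blocks, not on how they are grouped or cycled.

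The essential new feature compared with the earlier theorems is that there is no independence hypothesis, so the involution must \emph{not} touch the small blocks themselves; it may only re-organize the grouping and cycling of a fixed family of small blocks, which is precisely the structure on which the sign depends. I would fix $\rho \in \sD(S)$, let $\gamma_1$ be the small block containing $X_1$, and leave $\rho$ alone when $\gamma_1 = S$ (which forces $\tau = \{S\}$ with the trivial one-block cycle). Otherwise let $\gamma'$ be the small block containing the least element of $S$ not lying in $\gamma_1$. If $\gamma_1$ and $\gamma'$ lie in the same group, split that group: writing its cycle starting at $\gamma_1$ as $(\gamma_1\, a_1 \cdots a_{j-1}\, \gamma'\, a_{j+1} \cdots a_p)$, replace the group by the two groups with cycles $(\gamma_1\, a_1 \cdots a_{j-1})$ and $(\gamma'\, a_{j+1} \cdots a_p)$. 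If $\gamma_1$ and $\gamma'$ lie in different groups, splice those two groups together: writing their cycles starting at $\gamma_1$ and at $\gamma'$ as $(\gamma_1\, a_1 \cdots a_p)$ and $(\gamma'\, b_1 \cdots b_q)$, replace them by the single group with cycle $(\gamma_1\, a_1 \cdots a_p\, \gamma'\, b_1 \cdots b_q)$. (Under the correspondence between cyclically arranged partitions and permutations, this is just composing the induced permutation of the small blocks with the transposition $(\gamma_1\ \gamma')$, which is why it is an involution.) I would then check directly that splitting and splicing are mutual inverses — after splicing, $\gamma'$ is the first small block following the $\gamma_1$-cycle, so the split point is recovered — and that each move changes $m$ by exactly $1$ while fixing $k$ and every small block, hence is weight-preserving and sign-reversing.

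The only exception is the $\rho$ with $\gamma_1 = S$, i.e. $\tau = \{S\}$ with the single-block cycle; it has sign $(-1)^{1-1} = 1$ and weight $\E(\prod_{X \in S} X)$. Therefore $\sum_{\tau \in \sP(S)} \K(\tau) = \E(\prod_{X\in S} X)$, as claimed. The main obstacle I anticipate is purely bookkeeping: pinning down the "start the cycle at $\gamma_1$ (resp. $\gamma'$)" convention precisely enough that splitting and splicing are visibly inverse to each other, and confirming that neither move can ever reach the exceptional object, which it cannot since $\gamma_1 \ne S$ (equivalently $k \ge 2$) is preserved by both moves.
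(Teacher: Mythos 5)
Your proposal is correct and follows essentially the same route as the paper: the same two-level objects (the paper's $\sD(S)$ with inner and outer blocks), the same sign and weight functions, and the same merge/split involution on the cycles containing the two distinguished inner blocks (your $\gamma_1$ and $\gamma'$ coincide with the paper's $\g_1$ and $\g_2$, the first two inner blocks ordered by minimum element), with the same unique exception $\{\{S\}\}$. The observation that the move is composition with the transposition $(\gamma_1\ \gamma')$ is a nice way to see it is an involution, but the argument is otherwise the paper's.
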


\begin{proof} (DIE)

\begin{itemize} 

\item D (Description): Consider the expression 
\[
	\sum_{ \tau \in \sP(S) } \K(\tau) = \sum_{ \tau \in \sP(S) } \prod_{ \be \in \tau } \K( \be) = \sum_{ \tau \in \sP(S) } \prod_{ \be \in \tau  } \sum_{ \si \in \sC(\be) } (-1)^{ |\si| - 1 } V( \si ).
\]
Choose a partition $ \tau$ of $ S $. Then, for each $ \beta \in \tau $, we choose an element of $\sC(\beta)$. Let $\sD(S)$ be the set of these combinatorial objects. For each $ \rho \in \sD(S) $, we refer to each $ \be \in \rho $ as an outer block and each $ \g \in \be $ as an inner block. Such an object looks like 
\begin{center}
\begin{tikzpicture}
\path (0,0) node {$\be$} ;
\path (3,0) node {$\be$} ;
\path (9,0) node {$\be$} ;
\tikzstyle{every node} = [draw, shape = circle, fill=gray!5]
\path (0,0) +(0:1) node  {$\g$};
\path (0,0) +(60:1) node {$ \g$};
\path (0,0) +(300:1) node {$ \g$};
\draw[style = dashed] (0,0) +(90:1) arc (90:270:1);
\draw (0,0) +(25:1) arc (25:35:1);
\draw (0,0) +(-35:1) arc (-35:-25:1);

\path (3,0) +(0:1) node {$ \g $};
\path (3,0) +(60:1) node {$ \g $};
\path (3,0) +(300:1) node {$ \g $};
\draw[style = dashed] (3,0) +(90:1) arc (90:270:1);
\draw (3,0) +(25:1) arc (25:35:1);
\draw (3,0) +(-35:1) arc (-35:-25:1);

\draw[style = loosely dotted, line width = 2pt] (5, 0) -- (7.5, 0);

\path (9,0) +(0:1) node {$ \g $};
\path (9,0) +(60:1) node {$ \g $};
\path (9,0) +(300:1) node {$ \g $};
\draw[style = dashed] (9,0) +(90:1) arc (90:270:1);
\draw (9,0) +(25:1) arc (25:35:1);
\draw (9,0) +(-35:1) arc (-35:-25:1);

\end{tikzpicture}
\end{center} 
where the $\g$s are the inner blocks and the $\be$s are the outer blocks. Rewriting the sum using these new objects,
\[
	\sum_{ \tau \in \sP(S) } \K(\tau) = \sum_{ \rho \in \sD(S) } \prod_{ \be \in \rho }  (-1)^{ |\be| - 1 } V( \be ) .	
\]
Or, using our expression for the weight and distributing
\[
	\sum_{ \tau \in \sP(S) } \K(\tau) = \sum_{ \rho \in \sD(S) } (-1)^{ \sum_{ \be \in \rho }|\be| - |\rho|} \prod_{ \be \in \rho } \prod_{ \g \in \be} \E \lp \prod_{ X \in \g } X \rp. 
\]
Thus, our set of combinatorial objects is $ \sD(S) $, our sign function is 
\[
	s(\rho) = (-1)^{ \sum_{ \be \in \rho }|\be| - |\rho|},
\]
and our weight function is
\[
	W(\rho) = \prod_{ \be \in \rho } \prod_{ \g \in \be} \E \lp \prod_{ X \in \g } X \rp. 
\]

\item I (Involution): Consider any $\rho \in \sD(S)$. Let $\g_{1}, \g_2, \ldots, \g_{m}$ be the inner blocks of $\rho$ ordered by increasing order of their minimum index. (This order is arbitrary, but we are explicit about the order because we need the order to be the same for all elements of $\sD(S)$ with the same inner blocks). Then, we write each cycle in terms of the inner blocks using cycle notation. We will keep the inner blocks the same and apply our map to the cycles.

Now, either $ \g_1 $ and $ \g_2 $ do or do not lie in the same outer block or cycle. Let $ f $ be the map that merges the cycles containing $ \g_1 $ and $ \g_2 $ if they are not in the same cycle and splits the cycle containing both if they are in the same cycle. We split and merge these cycles by adding or removing parentheses in cycle notation, respectively. That is, let $ f $ map
\[
	(\g_1 C_1)(\g_2 C_2)(C_3)(C_4) \cdots (C_k) \leftrightarrow (\g_1 C_1 \g_2 C_2) (C_3)(C_4) \cdots (C_k) 
\] 
where each $ C_i $ represents any sequence of inner blocks. Such merging and splitting is well defined because of the order of the cycle. Clearly, $ f $ is an involution. Notice that the weight function is 
\[
	W(\rho) = \prod_{i=1}^m \E\left(\prod_{X \in \g_i} X\right),
\]
which depends only on the inner blocks. As $ f $ does not change the inner blocks, $ f $ is weight-preserving. Furthermore, $ f $ changes the number of outer blocks by $ \pm 1 $ and maintains the number of inner blocks, and we have the sign function $ (-1)^{ \sum_{ \be \in \rho }|\be| - |\rho|} $. Thus, $f$ is sign-reversing.

\item E (Exceptions): This involution is defined for any $\rho$ with two or more inner blocks, so $ f $ is not defined for any $\rho$ with only one inner block, which also means that $\rho$ has one outer block. Thus the only exception is $ \rho = \{ \{ S \} \} $, with sign and weight 
\[
	s(\rho) = 1, \qquad W(\rho) = \E \lp \prod_{ X \in S} X \rp.
\]
\end{itemize}
Therefore,
\[
	\E \lp \prod_{ X \in S } X \rp = \sum_{ \tau \in \sP(S) } \K(\tau). 
\]

\end{proof}

For the next theorem, we will need to define a partial order, $ \preceq $ ,on $ \sP(S) $. For $\pi, \tau \in \sP(S)$, we say that $ \pi $ is finer than $\tau$, or $\pi \preceq \tau$, if for all $\beta \in \pi$, there exists $\alpha \in \tau $ such that $\beta \subseteq \al$. Or, every block of $ \pi $ is contained in a block of $ \tau $.

\begin{thm}
\label{Mobius} For all $ \tau \in \sP(S) $,
\[
	  \prod_{ \beta \in \tau} \E \lp \prod_{ X \in \beta } X \rp = \sum_{ \pi \preceq \tau } \K( \pi ).
\]
\end{thm}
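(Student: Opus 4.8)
The plan is to deduce this from Theorem~\ref{CumulantSum} by applying that result inside each block of $\tau$ and then multiplying. The key observation is that a partition $\pi$ with $\pi \preceq \tau$ is nothing more than a choice, for each $\al \in \tau$, of a partition $\pi_\al \in \sP(\al)$, with $\pi = \bigcup_{\al \in \tau} \pi_\al$; conversely any such family of partitions of the blocks of $\tau$ assembles into a unique $\pi \preceq \tau$. Under this correspondence $\K(\pi) = \prod_{\be \in \pi} \K(\be) = \prod_{\al \in \tau} \prod_{\be \in \pi_\al} \K(\be) = \prod_{\al \in \tau} \K(\pi_\al)$, so the right-hand side factors as
\[
	\sum_{\pi \preceq \tau} \K(\pi) = \prod_{\al \in \tau} \sum_{\pi_\al \in \sP(\al)} \K(\pi_\al).
\]
Applying Theorem~\ref{CumulantSum} with $S$ replaced by $\al$ gives $\sum_{\pi_\al \in \sP(\al)} \K(\pi_\al) = \E\lp \prod_{X \in \al} X \rp$ for each $\al \in \tau$, and taking the product over $\al \in \tau$ yields the claim.

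If instead one wants a direct DIE proof in the style of the earlier theorems, the combinatorial set should be the collection of tuples $(\rho_\al)_{\al \in \tau}$ with $\rho_\al \in \sD(\al)$; expanding $\K(\pi)$ over $\sC$ and then over $\sD$ as in Theorem~\ref{CumulantSum} shows that the sign of such a tuple is $\prod_{\al \in \tau} s(\rho_\al)$ and its weight is $\prod_{\al \in \tau} W(\rho_\al)$ in the notation of that proof. Fix once and for all an ordering of the blocks of $\tau$, say by least index. The involution acts on the first block $\al$ in that ordering whose component $\rho_\al$ has two or more inner blocks, by performing exactly the merge/split of cycles from Theorem~\ref{CumulantSum} inside $\sD(\al)$ and leaving all other components untouched; this is a well-defined, weight-preserving, sign-reversing involution. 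The exceptions are the tuples in which every $\rho_\al$ is the unique one-inner-block object $\{ \{ \al \} \}$, and there is exactly one such tuple, with sign $1$ and weight $\prod_{\al \in \tau} \E\lp \prod_{X \in \al} X \rp$, which is the left-hand side.

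I expect no serious obstacle here: the content is entirely in recognizing the blockwise factorization of both sides along $\tau$, after which Theorem~\ref{CumulantSum} does all the work. The only points requiring care are the routine verification that $\pi \preceq \tau$ corresponds bijectively to families $(\pi_\al)_{\al \in \tau}$ with $\pi_\al \in \sP(\al)$, which is immediate from the definition of $\preceq$, and, in the DIE version, fixing a block ordering so that ``the first component with two or more inner blocks'' is unambiguous, making the involution consistent across the whole set.
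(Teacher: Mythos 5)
Your proposal is correct, and your primary argument takes a genuinely different route from the paper. The paper gives a direct DIE proof: it introduces the set $\sD_\tau(S) \sube \sD(S)$ of objects whose outer partition refines $\tau$, and runs the merge/split involution of Theorem~\ref{CumulantSum} on the inner blocks lying inside the first block $\al_i$ of $\tau$ that contains more than one of them, leaving the single exception whose inner blocks are exactly the blocks of $\tau$. Your first argument instead observes that $\pi \preceq \tau$ factors bijectively into a family $(\pi_\al)_{\al\in\tau}$ with $\pi_\al \in \sP(\al)$, that $\K(\pi) = \prod_{\al\in\tau}\K(\pi_\al)$, and hence that the right-hand side is $\prod_{\al\in\tau}\sum_{\pi_\al\in\sP(\al)}\K(\pi_\al)$, which Theorem~\ref{CumulantSum} evaluates block by block. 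This reduction is clean, correct, and arguably the conceptually right way to see the statement as the ``product version'' of Theorem~\ref{CumulantSum}; what it gives up is only the paper's stylistic goal of exhibiting every identity via an explicit involution. Your second, DIE-style argument is in fact the paper's proof in different clothing: a tuple $(\rho_\al)_{\al\in\tau}$ with $\rho_\al\in\sD(\al)$ is the same data as an element of $\sD_\tau(S)$, and acting on the first component with two or more inner blocks is exactly the paper's choice of the first $i$ with more than one $\g_{i,j}$. Both versions are sound; the only care needed, which you correctly flag, is fixing the block ordering so the involution is canonical.
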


\begin{proof} (DIE)

\begin{itemize} 

\item D (Description): Similarly to Theorem \ref{CumulantSum},
\[
	\sum_{ \pi \preceq \tau } \K( \pi ) = \sum_{ \pi \preceq \tau } \prod_{ \be \in \pi } \sum_{ \si \in \sC(\be) } (-1)^{ |\si| - 1 } V( \si ).
\]
Let $ \sD(S) $ be defined as in Theorem \ref{CumulantSum}. Choose a partition $ \pi $ of $ S $ such that $ \pi \preceq \tau $. Then, for each $ \beta \in \pi $, we choose an element of $\sC(\beta)$. Let $\sD_{\tau}(S) \sube \sD(S) $ be the set of these combinatorial objects. We continue to refer to inner and outer blocks as in Theorem \ref{CumulantSum}. We have the same objects as in \ref{CumulantSum} except the partition forming the outer blocks is finer than $ \tau $. Then let $W$ and $s$ be defined as in Theorem \ref{CumulantSum}. Then, similarly to Theorem \ref{CumulantSum},
\[
	\sum_{ \pi \preceq \tau } \K( \pi ) = \sum_{ \rho \in \sD_\tau(S)} s(\rho) W(\rho).
\]
\item I (Involution): Let $\al_1, \ldots, \al_{|\tau|}$ be the blocks of $ \tau $. Let $\rho \in \sD_\tau(S)$. Now, each inner block is contained in an outer block, and each outer block is contained in a block of $ \tau $ by definition of $ \sD_{ \tau}(S) $. Thus, each inner block is contained in a block of $ \tau $. Let $\g_{i, 1}, \g_{i,2}, \ldots, \g_{i, m}$ be the inner blocks of $\rho$ that are contained in $\al_i$, ordered as in Theorem \ref{CumulantSum}. Find the first $i$ such that there is more than one $\g_{i,j}$. Then let $ f $ be the same map as in Theorem 4, but just acting on the inner blocks with first index $ i $.  Let $ f $ merge the cycles containing $ \g_{i,1} $ and $ \g_{i,2} $ if they are not in the same cycle and split the cycle containing both if they are in the same cycle. That is, let $ f $ map
\[
	(\g_{i,1} C_1)(\g_2 C_2)(C_3)(C_4) \cdots (C_k) \leftrightarrow (\g_{i,2} C_1 \g_2 C_2) (C_3)(C_4) \cdots (C_k) 
\] 
where each $ C_j $ represents any sequence of inner blocks. Clearly, $ f $ is an involution. As before, the weight function depends only on the inner blocks. So, as $ f $ does not change the inner blocks, $ f $ is weight-preserving. Furthermore, $ f $ changes the number of outer blocks by $ \pm 1 $, and we have the same sign function as in Theorem \ref{CumulantSum}, so $ f $ is sign-reversing.

\item E (Exceptions): This involution is defined as long as we have two or more inner blocks for some block of $ \tu $. Thus the exception is the single $ \rho \in \sD_\tau(S) $ for which each outer block contains one inner block, and the inner blocks are the blocks of $\tau$. For this exception,
\[
	s(\rho) = 1, \qquad W(\rho) =  \prod_{ \beta \in \tau} \E \lp \prod_{ X \in \beta } X \rp.
\]
\end{itemize}
Therefore,
\[
	\sum_{ \pi \preceq \tau } \prod_{\beta \in \pi} \K( \beta ) = \prod_{ \beta \in \tau} \E \lp \prod_{ X \in \beta } X \rp.
\]

\end{proof}

For the next theorem, we will need the following definition. Let 
\[
	T = \{ X_{ i,j} \mid i \in [n], j \in [j_i] \}
\]
be a set of random variables, where $[n]$ denotes the set $\{1,2, \ldots, n\}$. Each $ \tau \in \sP([n]) $ induces a partition $ \tau^* \in \sP(T) $ defined by
\[
	\tau^* = \{ \{ X_{i,j} \, | \, i \in \beta, j \in [j_i] \} \, | \, \beta \in \tau \}.
\]
Then we say that $ \pi \in \sP(T) $ is indecomposable if $\pi \preceq \tau^* $ implies $ \tau = \{ [n] \} $. The following result is due to Leonov and Shiryaev in \cite{leonov}.

\begin{thm}
\label{Indecomposable}
\[
	\sum_{ \tau \in \sP(T) }^* \K(\tau) = \K \lp \prod_{ j = 1}^{j_1} X_{1, j},\prod_{ j = 1}^{j_2} X_{2, j}, \ldots, \prod_{ j = 1}^{j_n} X_{n , j} \rp ,
\]
where the $ * $ indicates the sum is over all indecomposable partitions of $ T $.

\end{thm}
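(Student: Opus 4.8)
The plan is to expand the right‑hand side via Theorem~\ref{Mobius}, interchange the two resulting sums, and recognize the inner sum as an instance of the ``sum of coefficients'' computed in Theorem~\ref{SumCoef}. First I would fix notation: write $Y_i = \prod_{j=1}^{j_i} X_{i,j}$ and, for $B \subseteq [n]$, $T_B = \{X_{i,j} : i \in B,\ j \in [j_i]\}$, so that $\{T_B : B \in \tau\} = \tau^*$ for every $\tau \in \sP([n])$. I would also record one elementary observation about indecomposability: for each $\pi \in \sP(T)$ there is a unique coarsest $\hat\pi \in \sP([n])$ with $\pi \preceq \hat\pi^{\,*}$ — concretely, the transitive closure of the relation on $[n]$ that links $i$ and $i'$ whenever some block of $\pi$ meets both $\{X_{i,j}\}_j$ and $\{X_{i',j}\}_j$ — that $\pi \preceq \tau^*$ holds exactly when $\hat\pi \preceq \tau$, and hence that $\pi$ is indecomposable precisely when $\hat\pi = \{[n]\}$.

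For the Description step, write $\K(Y_1,\ldots,Y_n) = \sum_{\sigma \in \sC([n])} (-1)^{|\sigma|-1} V(\sigma)$, identifying $\sC(\{Y_1,\ldots,Y_n\})$ with $\sC([n])$ and letting $\tau_\sigma$ be the partition of $[n]$ arranged by $\sigma$. Each factor of $V(\sigma)$ is $\E(\prod_{i\in B} Y_i) = \E(\prod_{X \in T_B} X)$, so $V(\sigma) = \prod_{\beta \in \tau_\sigma^*} \E(\prod_{X \in \beta} X)$; applying Theorem~\ref{Mobius} to the partition $\tau_\sigma^*$ of $T$ gives $V(\sigma) = \sum_{\pi \preceq \tau_\sigma^*} \K(\pi)$, whence
\[
\K(Y_1,\ldots,Y_n) = \sum_{\sigma \in \sC([n])} (-1)^{|\sigma|-1} \sum_{\pi \preceq \tau_\sigma^*} \K(\pi) = \sum_{(\sigma,\pi) \in C} (-1)^{|\sigma|-1}\, \K(\pi),
\]
where $C = \{(\sigma,\pi) : \sigma \in \sC([n]),\ \pi \in \sP(T),\ \pi \preceq \tau_\sigma^*\}$, the sign is $s(\sigma,\pi) = (-1)^{|\sigma|-1}$, and the weight is $W(\sigma,\pi) = \K(\pi) \in \R$. (If one prefers a fully unrolled description, each $\K(\pi) = \prod_{\gamma \in \pi} \K(\gamma)$ can be expanded into a sum over cyclic arrangements of partitions of the blocks of $\pi$ as in Theorems~\ref{CumulantSum} and \ref{Mobius}; the involution below never touches that extra data.)

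For the Involution, fix $(\sigma,\pi) \in C$ and let $\hat\pi$ be as above; since $\pi \preceq \tau_\sigma^*$ we have $\hat\pi \preceq \tau_\sigma$, so $\sigma$ descends to a cyclic arrangement of a partition of the set $[n]/\hat\pi$ of blocks of $\hat\pi$. On $C_0 = \{(\sigma,\pi) \in C : \hat\pi \ne \{[n]\}\}$ this set has size $|\hat\pi| \ge 2$, and I would apply the merge/split involution $f$ of Theorem~\ref{SumCoef} to $\sigma$, taking the distinguished element of $[n]/\hat\pi$ to be the block of $\hat\pi$ containing $1$. This alters only $\sigma$, leaving $\pi$ — hence $\hat\pi$, hence membership in $C_0$ — unchanged, so it is an involution on $C_0$; it is weight‑preserving because $W$ depends only on $\pi$; and it is sign‑reversing because it changes $|\sigma|$ by $\pm 1$. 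For the Exceptions, $(\sigma,\pi) \in C \setminus C_0$ means $\hat\pi = \{[n]\}$, i.e.\ $\pi$ is indecomposable, and then $\tau_\sigma \succeq \{[n]\}$ forces $\tau_\sigma = \{[n]\}$, so $\sigma$ is the unique one‑block cyclic arrangement, with $|\sigma| = 1$ and sign $1$. Summing over the exceptions gives $\sum_{\pi \text{ indecomposable}} \K(\pi) = \sum_{\tau \in \sP(T)}^* \K(\tau)$, which is the claimed identity.

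I expect the Description step to be the main obstacle — specifically, seeing that after pushing Theorem~\ref{Mobius} through the cumulant and interchanging sums, the residual sum over $\sigma$ for a fixed $\pi$ is exactly the ``sum of the coefficients of the cumulant'' on the contracted index set $[n]/\hat\pi$, which Theorem~\ref{SumCoef} annihilates unless that set is a single point. Making this precise requires the $\hat\pi$ bookkeeping and, crucially, the realization that the order‑$(|\sigma|-1)$ sign attached to $\K(Y_1,\ldots,Y_n)$ is precisely what is needed to cancel all decomposable $\pi$ by borrowing the involution of Theorem~\ref{SumCoef}; once that is seen, the Involution and Exception steps are routine.
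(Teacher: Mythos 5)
Your proof is correct, but it takes a genuinely different route from the paper's. The paper works from the left-hand side: it expands each $\K(\tau)$ over indecomposable $\tau$ into the two-level objects $\sD^*(T)$ of Theorem \ref{CumulantSum}, locates the smallest index $i$ admitting an ``$i$-split'' (a pair $X_{i,j}, X_{i,k}$ in different inner blocks), performs the merge/split involution on the two cycles involved, and must then separately verify that splitting a cycle cannot destroy indecomposability of the outer partition; the surviving exceptions are read off as the cyclic arrangements defining $\K\lp\prod_j X_{1,j},\ldots,\prod_j X_{n,j}\rp$. You instead work from the right-hand side, push Theorem \ref{Mobius} through each term $V(\sigma)$ of the cumulant, interchange sums, and observe that for fixed $\pi$ the residual signed sum over $\sigma$ is exactly the ``sum of the coefficients'' of Theorem \ref{SumCoef} on the contracted set $[n]/\hat\pi$, which vanishes unless $\hat\pi=\{[n]\}$, i.e.\ unless $\pi$ is indecomposable. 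Your version is shorter because it reuses Theorems \ref{SumCoef} and \ref{Mobius} as black boxes (and there is no circularity in doing so), and it avoids the indecomposability-preservation check entirely; it is essentially the M\"obius-inversion argument the introduction attributes to Speed, dressed as a DIE proof. The paper's version buys a single self-contained involution in the same spirit as the other proofs. One small correction: the partition $\hat\pi$ you construct via transitive closure is the \emph{finest} (not coarsest) element of $\sP([n])$ with $\pi \preceq \hat\pi^{\,*}$; that is the property you actually use when asserting $\pi \preceq \tau^*_\sigma \iff \hat\pi \preceq \tau_\sigma$, and your construction does deliver it, so this is only a slip of terminology.
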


\begin{proof} (DIE)

\begin{itemize}

\item D (Description): Let $ \sD(S) $ be defined as in Theorem \ref{CumulantSum}. Choose an indecomposable partition $ \tau $ of $ T $. Then, for each $ \beta \in \tau $, we choose an element of $\sC(\beta)$. Let $\sD^*(T) \sube \sD(T) $ be the set of these combinatorial objects. We continue to refer to inner and outer blocks as in Theorem \ref{CumulantSum}. We have the same objects as in \ref{CumulantSum} except the partition forming the outer blocks is indecomposable. Then let $W$ and $s$ be defined as in Theorem \ref{CumulantSum}. Then, similarly to Theorem \ref{CumulantSum},
\[
	\sum_{ \tau \in \sP(T) }^* \K( \tau ) = \sum_{ \rho \in \sD^*\tau(S)} s(\rho) W(\rho).
\]

\item I (Involution): Consider $\rho \in \sD^*(T)$. We define an $ i $-split as a pair $ \{ X_{i,j}, X_{i,k} \} $ that belong to different inner blocks of $ \rho $. Find the smallest $ i $ such that there exists an $ i $-split, $ \{ X_{i,j}, X_{i,k} \} $, with $j < k$. Let $ \be_1 $ and $ \be_2 $ be the inner blocks containing $ X_{i,j} $ and $ X_{i,k} $, respectively. Now, we perform the same involution as in Theorem \ref{CumulantSum} using these blocks. Let $ f $ merge the cycles containing $ \be_1 $ and $ \be_2 $ if they are not in the same cycle and split the cycle containing both if they are in the same cycle. That is, let $ f $ map
\[
	(\beta_1 C_1)(\beta_2 C_2)(C_3)(C_4) \cdots (C_m) \leftrightarrow (\beta_1 C_1 \beta_2 C_2) (C_3)(C_4) \cdots (C_m).
\] 
where each $ C_r $ represents any sequence of inner blocks. First, we verify that if $ f $ is defined for $ \rho \in \sD^*(T) $, then $ f(\rho) \in \sD^{*}(T) $, or that $ f $ preserves indecomposability of the outer partition of $\rho$. We will consider the cases when $f$ merges cycles and splits cycles separately. 

If $ f $ merges 2 cycles into a single cycle, then the new outer partition is larger under the refinement order. So clearly, indecomposability is preserved in this case.

Otherwise, $ f $ splits a single outer block, $ C $, into 2 outer blocks, say $ C_1 $ and $ C_2 $. Let $ \sigma $ be the outer partition of $ \rho $, and let $ \sigma' $ be the outer partition of $ f(\rho) $. We know $ \sigma $ is indecomposable. Suppose that $ \sigma' \preceq \tau^{*} $ for some $ \tau \in \sP([n]) $. So, $ C_1 $ and $ C_2 $ must be contained in some block of $ \tau^* $. By definition of $ f $, $ C_1 $ and $ C_2 $ both contain an element with first index $ i $. Thus, as $ \tau^* $ is an induced partition, $ C_1 $ and $ C_2 $ must be contained in the same block of $ \tau^{*} $. Hence, $ C $ is also contained in some block of $ \tau^* $. As all other blocks of $ \sigma $ are also blocks of $ \sigma' $, it follows that $ \sigma \preceq \tau^* $ as well. But, because $ \sigma $ is indecomposable, $ \tau = \{ [n] \} $. Thus, $ \sigma' $ is indecomposable as well. Therefore, $ f $ preserves indecomposability.     

As before, $ f $ is a sign-reversing, weight-preserving involution. 

\item E (Exceptions): This involution is defined for any $\rho \in \sD^*(T)$ with an $ i $-split for some $ i $, but it fails if we have no such splits. Note that $\rho \in \sD^*(T)$ cannot have more than one outer block and no splits, because then the outer partition would not be indecomposable. Thus, each exception $\rho \in \sD^*(T)$ has outer partition $ \{ T \} $, and for each index $ i $, all random variables with first index $ i $ must belong to the same inner block. So there exists a unique $\si \in \sC([n])$ for which the set of inner blocks of $\rho$, with the same cyclic order, is
\[
	\{\{ X_{i,j} \, | \, i \in \beta, j \in [j_i] \} \, | \, \beta \in \si\}, 
\]
and an exception of this form exists uniquely for every $\si \in \sC([n])$.The sign and weight of each exception $\rho$ with corresponding $\si$ is
\[
	s(\rho) =  (-1)^{|\si|-1}, \qquad W(\rho) = \prod_{\beta \in \si} \E \left( \prod_{i \in \beta} \left( \prod_{j = 1}^{j_i} X_{i,j} \right) \right).
\]
Hence, the total sum reduces to
\[
	\sum_{\si \in \sC([n])} (-1)^{|\si|-1} \prod_{\beta \in \si} \E \left( \prod_{i \in \beta} \left( \prod_{j = 1}^{j_i} X_{i,j} \right) \right) = \K \lp \prod_{ j = 1}^{j_1} X_{1, j},\prod_{ j = 1}^{j_2} X_{2, j}, \ldots, \prod_{ j = 1}^{j_n} X_{n , j} \rp
\]
by definition of the Joint Cumulant.

\end{itemize}

Therefore,
\[
	\sum_{ \tau \in \sP(T) }^* \K(\tau) = \K \lp \prod_{ j = 1}^{j_1} X_{1, j},\prod_{ j = 1}^{j_2} X_{2, j}, \ldots, \prod_{ j = 1}^{j_n} X_{n , j} \rp.
\]
\end{proof}

Let $ Y $ be a random variable. Define the conditional joint cumulant as
\[
	\kappa( S \, | \, Y ) = \sum_{ \tau \in \sP(S) } (-1)^{|\tau| - 1} (|\tau| - 1)! \prod_{\beta \in \tau}\E \lp \prod_{X \in \beta} X \, \bigg| \, Y \rp
\]
It is the same as the joint cumulant except the expectations are conditional. The following theorem comes from Brillinger in \cite{brillinger}.

\begin{thm}
\label{Conditional}
We can generalize the Tower Formula as follows
\[
	\K(S) = \sum_{ \tau \in \sP(S) } \K(\K(\beta_1 \mid Y), \K(\beta_2 \mid Y ), \ldots, \K(\beta_{|\tau|} \mid Y) ),
\]
where the $\beta_i$ are the blocks of partition $\tau$.
\end{thm}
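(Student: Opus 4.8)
The plan is to run DIE on the right-hand side. First expand the outer joint cumulant via the $\sC$-interpretation and each $\K(\beta_i\mid Y)$ via the conditional analogue of that interpretation (which is immediate from the definition of $\kappa(\cdot\mid Y)$): writing $\K(\K(\beta_1\mid Y),\ldots,\K(\beta_{|\tau|}\mid Y))=\sum_{\omega\in\sC(\tau)}(-1)^{|\omega|-1}\prod_{B\in\omega}\E(\prod_{\beta\in B}\K(\beta\mid Y))$ and then $\K(\beta\mid Y)=\sum_{\sigma_\beta\in\sC(\beta)}(-1)^{|\sigma_\beta|-1}\prod_{\gamma\in\sigma_\beta}\E(\prod_{X\in\gamma}X\mid Y)$, and distributing all products and expectations, an object $\rho$ of the resulting set $\mathscr{E}(S)$ records a partition $\tau$ of $S$ into \emph{middle} blocks, a cyclically arranged partition $\sigma_\beta\in\sC(\beta)$ of each middle block into \emph{inner} blocks, and a cyclically arranged partition $\omega\in\sC(\tau)$ of the set of middle blocks into \emph{outer} blocks. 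Its weight is $W(\rho)=\prod_{B\in\omega}\E\bigl(\prod_{\gamma\subseteq B}\E(\prod_{X\in\gamma}X\mid Y)\bigr)$, the inner double product over $\beta\in B$ and $\gamma\in\sigma_\beta$ collapsing to the product over the inner blocks $\gamma$ contained in $B$, and its sign is $(-1)^{|\omega|-1}\prod_{\beta\in\tau}(-1)^{|\sigma_\beta|-1}=(-1)^{o-1+m-d}$, where $o$, $m$, $d$ count the outer, inner, and middle blocks respectively. The structural point I would stress is that $W(\rho)$ depends only on the inner blocks and on which outer block each inner block sits in — not at all on the middle-block layer, nor on either cyclic arrangement.

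For the involution I would mimic Theorem \ref{CumulantSum} one level deeper. Order the inner blocks $\gamma_1,\ldots,\gamma_m$ by least index, let $i$ be the smallest index with $\gamma_i$ not the only inner block of its outer block $B$ (so $\gamma_i$ is automatically the least-indexed inner block of $B$), and let $\gamma_j$ be the second least-indexed inner block of $B$. If $\gamma_i$ and $\gamma_j$ lie in the same middle block, split it between them, $(\gamma_i C_1\gamma_j C_2)\mapsto(\gamma_i C_1)(\gamma_j C_2)$; otherwise merge the two middle blocks containing them, $(\gamma_i C_1)(\gamma_j C_2)\mapsto(\gamma_i C_1\gamma_j C_2)$; in either case the new middle block(s) remain inside $B$, and the normalizations starting from $\gamma_i$ and $\gamma_j$ are canonical because $\gamma_i$ and $\gamma_j$ keep their least- and second-least-indexed status inside every middle block of $B$ that contains them. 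Since inner blocks and their outer-block memberships are untouched, $i$, $B$, and $\gamma_j$ agree for $\rho$ and $f(\rho)$, so $f$ is an involution on the non-exceptions; it is weight-preserving because $W$ is blind to the middle layer; and it changes $d$ by $\pm1$ with $o$ and $m$ fixed, hence is sign-reversing.

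The exceptions are precisely the $\rho$ for which every inner block is alone in its outer block; then each outer block is a single middle block equal to a single inner block, so $\rho$ amounts to a partition $\tau$ of $S$ together with a cyclic arrangement of its blocks, i.e.\ an element $\sigma\in\sC(S)$, with sign $(-1)^{|\sigma|-1}$ and, by the tower property, weight $\prod_{\beta\in\sigma}\E(\E(\prod_{X\in\beta}X\mid Y))=\prod_{\beta\in\sigma}\E(\prod_{X\in\beta}X)$. Summing over the exceptions therefore yields $\sum_{\sigma\in\sC(S)}(-1)^{|\sigma|-1}\prod_{\beta\in\sigma}\E(\prod_{X\in\beta}X)=\K(S)$, which is the claimed identity. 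I expect the only real obstacle to be the Description step: carefully disentangling the three nested layers (outer/middle/inner blocks) and the two independent cyclic arrangements, and verifying that $W$ genuinely ignores the middle layer so that the merge/split move is weight-preserving; once that bookkeeping is in place, the involution and the tower-property evaluation of the exceptions are routine.
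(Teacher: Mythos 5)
Your proposal is correct and follows essentially the same route as the paper: the same three-layer object (inner/middle/outer blocks with two cyclic arrangements), the same observation that the weight ignores the middle layer, the same merge/split involution on middle blocks within a canonically chosen outer block, and the same identification of the exceptions with $\sC(S)$ evaluated via the tower property. The only cosmetic difference is your selection rule for where to apply the involution (least-indexed inner block not alone in its outer block, versus the paper's least-indexed outer block containing more than one inner block), and both are equally valid canonical choices.
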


\begin{proof} (DIE)

\begin{itemize} 

\item D (Description): First, we have	 
\[
	 \sum_{ \tau \in \sP(S) } \K(\K(\beta_1 \mid Y), \K(\beta_2 \mid Y ), \ldots, \K(\beta_{|\tau|} \mid Y) ) 
\]
\[
	= \sum_{ \tau \in \sP(S) } \; \sum_{ \pi \in \sC(  \K(\be_1 \mid Y), \K(\be_2 \mid Y ), \ldots \K(\be_{|\tau|} \mid Y)) } (-1)^{ |\pi| - 1} V(\pi).
\]
Here, each $ \K(\be_i | Y) $ is a sum over all cyclic arrangments of partitions of $ \be_i $. So, the sum represents partitioning $ S $ into blocks which we arrange into inner cycles, and then partitioning the set of cycles into outer blocks, which we arrange into an outer cycle. But this is equivalent to first partitioning $ S $ into outer blocks, which we arrange into a cycle, then partitioning the outer blocks into middle blocks, and then partitioning each middle block into inner blocks, which we arrange into a cycle. 

Choose a cyclically arranged partition $ \si $ of $ S $. Then, for each $ \be \in \si $, choose an element of $ \sD(\be) $. Let $\sG(T) $ be the set of these combinatorial objects. Then, each object $ \rho \in \sG(S) $ looks like

\begin{center}
\begin{tikzpicture}
\path (0,0) +(160: 2) coordinate (a);

\draw[fill=gray!20] (a) circle (1.7);
\path (a) +(-1.05,0) coordinate (a1);
\path[fill=black] (a1) +(0:.5) circle (.2);
\path[fill=black] (a1) +(60:.5) circle (.2);
\path[fill=black] (a1) +(300:.5) circle (.2);
\draw[style = dashed] (a1) +(90:.5) arc (90:270:.5);
\draw (a1) +(25:.5) arc (25:35:.5);
\draw (a1) +(-35:.5) arc (-35:-25:.5);
\draw[style = loosely dotted, line width = 1.5pt] (a1) +(.8,0) -- +(1.3, 0);
\draw (a1) +(1.9,0) coordinate (a2);
\path[fill=black] (a2) +(0:.5) circle (.2);
\path[fill=black] (a2) +(60:.5) circle (.2);
\path[fill=black] (a2) +(300:.5) circle (.2);
\draw[style = dashed] (a2) +(90:.5) arc (90:270:.5);
\draw (a2) +(25:.5) arc (25:35:.5);
\draw (a2) +(-35:.5) arc (-35:-25:.5);

\path (0,0) +(20: 2) coordinate (a);

\draw[fill=gray!20] (a) circle (1.7);
\path (a) +(-1.05,0) coordinate (a1);
\path[fill=black] (a1) +(0:.5) circle (.2);
\path[fill=black] (a1) +(60:.5) circle (.2);
\path[fill=black] (a1) +(300:.5) circle (.2);
\draw[style = dashed] (a1) +(90:.5) arc (90:270:.5);
\draw (a1) +(25:.5) arc (25:35:.5);
\draw (a1) +(-35:.5) arc (-35:-25:.5);
\draw[style = loosely dotted, line width = 1.5pt] (a1) +(.8,0) -- +(1.3, 0);
\draw (a1) +(1.9,0) coordinate (a2);
\path[fill=black] (a2) +(0:.5) circle (.2);
\path[fill=black] (a2) +(60:.5) circle (.2);
\path[fill=black] (a2) +(300:.5) circle (.2);
\draw[style = dashed] (a2) +(90:.5) arc (90:270:.5);
\draw (a2) +(25:.5) arc (25:35:.5);
\draw (a2) +(-35:.5) arc (-35:-25:.5);

\draw (0,0) +(71: 2) arc (71: 109: 2);
\draw[style = dashed] (0,0) +(215: 2) arc (215: 325: 2);

\end{tikzpicture}
\end{center} 
where the black circles are the inner blocks, the cycles of these are the middle blocks, and the larger two gray circles are the outer blocks. For $ \rho \in \sG(S) $, we refer to each $ \al \in \rho $ as an outer block, each $ \be \in \al $ as a middle block, and each $ \g \in \be $ as an inner block. Then, distributing, we see that the sum becomes 
\[
 	\sum_{ \rho \in \sG(S) } (-1)^{|\rho| -1} (-1)^{ \sum_{\alpha \in \rho} \left( \left(\sum_{\beta \in \alpha}|\beta|\right) - |\alpha| \right) } \prod_{\alpha \in \rho} \E\left( \prod_{\beta \in \alpha} \prod_{\gamma \in \beta} \E \left( \prod_{X \in \gamma} X \, \bigg| \, Y \right) \right).
\]
So, our set of combinatorial objects is $ \sG(S) $, our sign function is 
\[
	s(\rho) = (-1)^{|\rho| -1} (-1)^{ \sum_{\alpha \in \rho} \left( \left(\sum_{\beta \in \alpha}|\beta|\right) - |\alpha| \right) },
\]
and our weight function is
\[
	W(\rho) = \prod_{\alpha \in \rho} \E\left( \prod_{\beta \in \alpha} \prod_{\gamma \in \beta} \E \left( \prod_{X \in \gamma} X \, \bigg| \, Y \right) \right).
\]

\item I (Involution): Consider $\rho \in \sG(S)$. Let $\alpha_1, \ldots, \alpha_m$ be the outer blocks, ordered by increasing minimum index. Consider the smallest $ i $ such that $ \alpha_i $ contains more than one inner block. Let $\g_1, \g_2, \ldots, \g_k $ be the inner blocks of $\alpha_i$. Then perform the same merge- split involution as in the previous proofs on these inner blocks. That is, within $\alpha_i$, let $ f $ map
\[
	(\gamma_1 C_1)(\gamma_2 C_2)(C_3)(C_4) \cdots (C_r) \leftrightarrow (\gamma_1 C_1 \gamma_2 C_2) (C_3)(C_4) \cdots (C_r) 
\] 
where each $ C_j $ represents any sequence of inner blocks. As before, $ f $ is an involution. Here, the weight function
\[
	W(\rho) = \prod_{\alpha \in \rho} \E\left( \prod_{\beta \in \alpha} \prod_{\gamma \in \beta} \E \left( \prod_{X \in \gamma} X \, \bigg| \, Y \right) \right)
\]
depends only on the inner and outer blocks. Because $f$ does not change the inner or outer blocks, $ f $ is weight-preserving. Furthermore, $ f $ changes the number of middle blocks by $ \pm 1 $ but does not change the number of inner or outer blocks, and we have the sign function 
\[
	s(\rho) = (-1)^{|\rho| -1} (-1)^{ \sum_{\alpha \in \rho} \left( \left(\sum_{\beta \in \alpha}|\beta|\right) - |\alpha| \right) },
\]
so $f$ is sign-reversing.

\item E (Exceptions): This involution is defined for $\rho \in \sG(S)$ if and only if there exists an outer block containing more than one inner block. Therefore, $\rho$ is an exception if each outer blocks contains only one inner block. So, the set of exceptions has a one-to-one correspondence with $ \sC(S) $, corresponding to choosing the outer blocks and cyclically arranging them. Suppose that the outer blocks of $\rho$, considering also their cyclic order, form the cyclically ordered partition $ \sigma \in \sC(S)$. Then exception $\rho$ has weight and sign
\[
	s(\rho) = (-1)^{ |\sigma| - 1 }, \qquad W(\rho) = \prod_{\beta \in \sigma} \E \left( \E\left( \prod_{X \in \beta} X \, \bigg| \, Y \right) \right) = \prod_{\beta \in \sigma} \E \left( \prod_{X \in \beta} X \right),
\]
by the tower formula. So, the sum totals
\[
	\sum_{\sigma \in \sC(S)} (-1)^{|\sigma| - 1} \prod_{\beta \in \sigma} \E \left( \prod_{X \in \beta} X \right) = \K(S).
\]

\end{itemize}
Therefore,
\[
\sum_{ \tau \in \sP(S) } \K(\K(\beta_1 \mid Y), \K(\beta_2 \mid Y ), \ldots, \K(\beta_{|\tau|} \mid Y) ) = \kappa(S).
\]

\end{proof}


\begin{thebibliography}{MM}

\bibitem[B1]{benjamin}
Benjamin, A. T. and Quinn, J.J. 
``Alternate Approach to Alternating sums: A Method to DIE For'',
{\em College Math.\ Journal.}, 39.3 (2008) 191--201.

\bibitem[B2]{brillinger}
Brillinger, D. R. (1969) ``The Calculation of Cumulant via. Conditioning'' {\em Ann. Inst. Statist. Math} 21, 375-390.

\bibitem[B3]{history}
Brillinger, D. R. (1991) ``Some histroy of the Higher-Order Moments and Spectra'' {\em Statistical Sinica}, 1, 465-476

\bibitem[LS]{leonov}
Leonov, V.P. and Shiryaev, A. N. (1959)
``On a method of calculation of semi-invariants.''
{\em Theor. Prob. Appl.} 4, 319-329.

\bibitem[S]{speed}
Speed, T.P. 
``Cumulants and Partition Lattices''
{\em Austral. J. Statist.}, 25(2) (1983) 378-388.

\bibitem[T]{thiele}
Thiele, T.N. (1899) {\em Om Iagttagelseslaere Halvinvarianter, Overs. Vid. Sels. Forh.} 135-141.


\end{thebibliography}
\end{document}